\documentclass[a4,12pt]{amsart}
\usepackage{amsfonts}
\usepackage{amssymb}
\usepackage{amsmath}
\usepackage{amsthm}
\usepackage{fullpage}
\theoremstyle{plain}
\newtheorem{thm}{Theorem}[section]
\newtheorem{lem}[thm]{Lemma}
\theoremstyle{definition}

\newtheorem{ex}[thm]{Example}
\newtheorem{rem}[thm]{Remark}

\newtheorem{pro}[thm]{Proposition}
\numberwithin{equation}{section}

\begin{document}
\title[Neumann systems with dependence on the gradient]{Multiple positive radial solutions for Neumann elliptic systems with gradient dependence}\thanks{Partially supported by G.N.A.M.P.A. - INdAM (Italy)}
\author[F. Cianciaruso]{Filomena Cianciaruso}
\address{Filomena Cianciaruso, Dipartimento di Matematica ed Informatica, Universit\`{a}
della Calabria, 87036 Arcavacata di Rende, Cosenza, Italy}
\email{cianciaruso@unical.it}
\author[G. Infante]{Gennaro Infante}%
\address{Gennaro Infante, Dipartimento di Matematica e Informatica, Universit\`{a} della
Calabria, 87036 Arcavacata di Rende, Cosenza, Italy}
\email{gennaro.infante@unical.it}
\author[P. Pietramala]{Paolamaria Pietramala}%
\address{Paolamaria Pietramala, Dipartimento di Matematica e Informatica, Universit\`{a}
della Calabria, 87036 Arcavacata di Rende, Cosenza, Italy}
\email{pietramala@unical.it}
\subjclass[2010]{Primary 35B07, secondary 34B18, 35J57, 47H10}
\keywords{Elliptic system, annular domain, radial solution, positive solution, multiplicity, non-existence,
 cone, fixed point index.}
\begin{abstract}
We provide new results on the existence, non-existence
and multiplicity of non-negative radial solutions for semilinear
elliptic systems with Neumann boundary conditions on an annulus.
Our approach is topological and relies on the classical fixed
point index. We present an example to illustrate our theory.
\end{abstract}
\maketitle
\section{Introduction}
In the recent manuscript~\cite{defig-ubi}, by means of topological methods, De Figueiredo and Ubilla studied the existence of positive radially symmetric solutions of the following system of elliptic differential equations, with dependence on the gradient, under~\emph{Dirichlet} boundary conditions (BCs)
\begin{equation*}
\begin{cases}
-\Delta u =f_1(|x|,u,v,|\nabla u|,|\nabla v|)\ \text{in}\ \Omega, \\
-\Delta v=f_2(|x|,u,v,|\nabla u|,|\nabla v|)\ \text{in}\ \Omega,\\
\ u=v=0
 \text{ on }\partial\Omega,
\end{cases}
\end{equation*}
where $\Omega$ is an annular domain.

On the other hand, there has been recent interest in the existence of radial solutions of elliptic equations with~\emph{Neumann} BCs on
annular domains, see for example~\cite{bonheureannulus, boscaggin1, Feltrin-Zan, sfecci, Sovr-Zan} and references therein.
 In particular, variational methods are used in~\cite{bonheureannulus}, shooting methods are utilized in ~\cite{boscaggin1, Sovr-Zan}, upper and lower solution techniques are employed in~\cite{sfecci} and 
 Mawhin's coincidence degree theory is used in~\cite{Feltrin-Zan}. We mention that the existence of multiple solutions has been investigated,  on more general domains, by means of critical point theory in~\cite{Bonanno1, Bonanno2, dagui}.
We note that the nonlinearities considered in~\cite{Bonanno1, Bonanno2, bonheureannulus, boscaggin1, dagui, Feltrin-Zan, sfecci, Sovr-Zan} do not depend on the gradient.

Here we focus on the systems of BVPs
\begin{equation}\label{ellbvp-secapp}
\begin{cases}\
-\Delta u =f_1(|x|,u,v,|\nabla u|,|\nabla v|)\ \text{in}\ \Omega, \\
-\Delta v=f_2(|x|,u,v,|\nabla u|,|\nabla v|)\ \text{in}\ \Omega,\\
\ \displaystyle\frac{\partial u}{\partial r}=\frac{\partial
v}{\partial r}=0
 \text{ on }\partial\Omega\,,
\end{cases}
\end{equation}
where $\Omega=\{ x\in\mathbb{R}^n : R_0<|x|<R_1\}$ is an annulus,
$0<R_0<R_1<+\infty$, the functions $f_i$ are continuous and $\dfrac{\partial}{\partial r}$ denotes (as
in ~\cite{nirenberg}) differentiation in the radial direction
$r=|x|$.

One difficulty that occurs when dealing with the existence of radial solutions of the system~\eqref{ellbvp-secapp}, is that the linear part of the associated ordinary differential equations is not invertible. 

In order to overcome this problem, we make use of a shift argument utilized, for example, in~\cite{Tor} (in the case of periodic BCs) and in~\cite{Han, infpieto, WebbZima}. In particular, here we benefit of the results given in~\cite{infpieto}, where the authors studied the existence of multiple solutions of one ordinary differential equation under local and nonlocal Neumann BCs. We stress that the results in~\cite{infpieto} do not allow a derivative dependence in the nonlinearity. In order to deal with the derivative dependence we construct a cone in in the space $C^1$.
The idea here is to involve a lower
bound for the function $u$ in terms of the $C^1$-norm, rather than
employing two Harnack-type inequalities on $u$ and $u'$ as
in~\cite{fm+rs}. As far as we are aware of, the cone we use is new. 

We prove the existence of multiple non-negative radial
solutions for the system~\eqref{ellbvp-secapp} by means of
classical fixed point index theory. 
We also prove, by an elementary argument, a non-existence result. We provide an example to illustrate the theory, where we show the existence of three non-negative, non-constant solutions.

\section{Preliminary results}
We begin by considering  in $\mathbb{R}^n$, $n\ge 2$, the equation
\begin{equation}\label{eqell}
-\triangle w=f(|x|,w,|\nabla w|)\text{ in } \Omega.
\end{equation}
Since we are looking for the existence of non-negative radial solutions $w=w(r)$, $r=|x|$ of the system \eqref{ellbvp-secapp},
we rewrite \eqref{eqell} for $w$ in polar coordinates as
\begin{equation}\label{eqinterm}
-w''(r)-\dfrac{n-1}{r}w'(r)= f(r,w(r),|w'(r)|) \quad\text{ in }
[R_{0}, R_{1}].
\end{equation}
Set $w(t)=w(r(t))$, where, for $t\in[0,1]$, (see \cite{defig-ubi, dolo3})
\begin{equation*}
r(t):=\begin{cases}
R_1^{1-t}R_0^{t},\ &n=2,\\
\left(\frac{A}{B-t}\right)^{\frac{1}{n-2}},\ &n\geq 3,
\end{cases}
\end{equation*}
$$
A=\frac{(R_0R_1)^{n-2}}{R_1^{n-2}-R_0^{n-2}}\,\,\,\,\text{
 and  }\,\,\,\,B=\frac{R_1^{n-2}}{R_1^{n-2}-R_0^{n-2}}\,.
$$
Take, for $t\in[0,1]$,
\begin{equation*}
d(t):=\begin{cases}
r^2(t) \log^2(R_1/R_0), \ & n=2,\\
\left(\frac{R_0R_1\left(R_1^{n-2}-R_0^{n-2}\right)}{n-2}\right)^2\,\frac{1}{\left(R_1^{n-2}-(R_1^{n-2}-R_0^{n-2})t\right)^{\frac{2(n-1)}{n-2}}},\
&n\geq 3,
\end{cases}
\end{equation*}
then the equation ~\eqref{eqinterm} becomes
\begin{equation*}
-w''(t)= d(t)
f\left(r(t),w(t),\left|\frac{w'(t)}{r'(t)}\right|\right).
\end{equation*}

Note that, since $\lambda=0$ is an eigenvalue of the associated linear problem
$$
-w''(t)=\lambda w(t),\quad w'(0)=w'(1)=0,
$$
the corresponding Green's function does not exist. Therefore we
proceed as  in~\cite{infpieto} and we study a related BVP for which the Green's
function can be constructed; namely, fixed $\omega>0$, for $t \in
[0,1]$, we set
\begin{gather}\label{shiftint2}
- w''(t)+ \omega ^2 w(t)=g(t,w(t),|w'(t)|):=d(t)  f\left(r(t),w(t),\left|\frac{w'(t)}{r'(t)}\right|\right)+\omega^2 w(t),\\
\nonumber
 w'(0)=w'(1)=0.
\end{gather}
\begin{rem}
The choice of an appropriate $\omega$ in the shift argument above is somewhat delicate, as it affects some important constants that we use in our theory (see~\eqref{mM}-\eqref{ci}). Also, given a nonlinearity $f$, $\omega$ should be large enough to allow the auxiliary function $g$ occurring in~\eqref{shiftint2} to be non-negative.   
\end{rem}

Moving our attention back to the system~\eqref{ellbvp-secapp}, in a similar way as above, by setting $u(t)=u(r(t))$ and $v(t)=v(r(t))$, we can associate
to the system~\eqref{ellbvp-secapp}
the system of ODEs
\begin{equation}\label{1syst}
\begin{cases}
-u''(t) + \omega_1^2u(t)= g_1(t,u(t),v(t),|u'(t)|,|v'(t)|) \  \text{ in } [0,1], \\
-v''(t) +\omega_2^2v(t)=g_2(t,u(t),v(t),|u'(t)|,|v'(t)|) \  \text{ in } [0,1],\\
\ u'(0)=u'(1)=v'(0)=v'(1)=0.
\end{cases}
\end{equation}
 By a radial solution of the system~\eqref{ellbvp-secapp} we mean a solution of the system~\eqref{1syst}.

From now on we assume that:
\begin{itemize}
\item[$(H)$] For $i=1,2$, $f_i:[R_0,R_1]\times([0,+\infty[)^4\rightarrow \mathbb{R}$  is a continuous function such that
\begin{equation}\label{lwb}
f_i(r,z_1,z_2,w_1,w_2)\geq \,-\frac{\omega_i^2\,z_i}{\displaystyle\max_{t \in
[0,1]}d(t)}\ \text{ in } [R_0,R_1]\times([0,+\infty[)^4.
\end{equation}
\end{itemize}
\begin{rem}
Due to the assumption $(H)$, $g_i $ are non-negative continuous functions in~$[0,1]\times([0,+\infty[)^4.$ 
Note that if we fix  $\Omega=\{ x\in\mathbb{R}^2 : 1<|x|<e\}$ and $\omega_i=1$, then~\eqref{lwb} reads
\begin{equation*}
f_i(r,z_1,z_2,w_1,w_2)\geq \,-e^{-2}z_i\ \text{ in } [1,e]\times([0,+\infty[)^4,
\end{equation*}
a linear bound from below for the nonlinearities.
\end{rem}

We  study the existence of solutions of the system~\eqref{1syst}
by means of the fixed points of a suitable operator on the space
$C^1[0,1]\times C^1[0,1]$ equipped with the norm
\begin{equation*}
|| (u,v)|| :=\max \{|| u|| _{C^{1}},|| v|| _{C^{1}}\},
\end{equation*}%
where $|| w|| _{C^{1}}:=\max \left\{ || w|| _{\infty},|| w'|| _{\infty}\right\} $ and $|| y|| _{\infty}:=\underset{t\in [ 0,\,1]\,%
}{\max }|y(t)|$.

We define the integral operator $T:C^1[0,1]\times C^1[0,1]\to
C^1[0,1]\times C^1[0,1]$ by
\begin{equation}\label{operT}
\begin{array}{c}
T(u,v)(t):=\left(
\begin{array}{c}
T_{1}(u,v)(t) \\
T_{2}(u,v)(t)%
\end{array}%
\right)  =\left(
\begin{array}{c}
\int_{0}^{1}k_1(t,s)g_{1}(s,u(s),v(s),|u'(s)|,|v'(s)|)\,ds \\
\int_{0}^{1}k_2(t,s)g_2(s,u(s),v(s),|u'(s)|,|v'(s)|)ds%
\end{array}%
\right),%
\end{array}
\end{equation}%
where the Green's functions $k_i$  are given by
$$
k_i(t,s)=\frac{1}{\omega_i\sinh\omega_i}\,\begin{cases}
\cosh(\omega_i(1-t))\cosh \omega_i s, &0\leq s\leq t\leq 1,\cr
\cosh(\omega_i(1-s))\cosh \omega_i t, & 0\leq t\leq s\leq
1.
\end{cases}
$$

The following Lemma provides some useful properties of the kernels $k_i$
(see~\cite{infpieto} for some of these estimates).
\begin{lem} The following hold, for $i=1,2$:{}
\begin{itemize}
 \item[(1)]  The kernel $k_i$ is positive and continuous in  $[0,1]\times  [0,1]$. Moreover we have
\begin{align*}
c_{k_i}\, \phi_i (s)\leq & k_i(t,s) \leq \phi_i(s),\ \text{ for } (t,s)\in
[0,1] \times [0,1],
\end{align*}
where we take
$$\phi_i(s):=\sup_{t\in [0,1]} k_i(t,s)= k_i(s,s)=\frac{1}{\omega_i\sinh\omega_i}\,\cosh(\omega_i(1-s))\cosh \omega_i s\,
$$
and
$$
c_{k_i}:=\min_{t\in[0,1]}\min_{s\in[0,1]}\frac{k_i(t,s)}{\phi_i(s)}=\frac{1}{\cosh
\omega_i}\,.
$$
 \item[(2)]  The function $k_i(\cdot,s)$ is differentiable in $[0,1]$ for a.e. $s\in [0,1]$, with
$$
\dfrac{\partial k_i}{\partial t}(t,s)=\frac{1}{\sinh
\omega_i}\,\begin{cases} -\cosh \omega_i s \sinh(\omega_i(1-t)),&0\leq
s< t\leq 1,\cr \cosh(\omega_i(1-s))\sinh\omega_i t, & 0\leq t< s\leq
1,\cr\end{cases}
$$
and for every $\tau \in [ 0,1]$ we have
\begin{equation*}
\lim_{t\rightarrow \tau }\left| \frac{\partial k_i}{\partial
t}(t,s)- \frac{\partial k_i}{\partial t}(\tau ,s)\right| =0,\
\text{for a.e.}\ s\in [ 0,1].
\end{equation*}
The partial derivative  $\dfrac{\partial k_i}{\partial t}(t,s)<0$
for $s<t$, $\dfrac{\partial k_i}{\partial t}(t,s)>0$ for $s>t$ and
$$
 \left|\frac{\partial k_i}{\partial t}(t,s)\right|\leq \omega_i\phi_i(s),\text{    for   }t\in [0,1]\text{     and a.e. }\,s\in [0,1].
 $$
\end{itemize}
\end{lem}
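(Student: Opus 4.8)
The plan is to verify each assertion directly from the explicit formula for $k_i$, treating the two regions $s\le t$ and $t\le s$ separately and exploiting only the elementary monotonicity of $\cosh$ and $\sinh$ on $[0,+\infty[$.

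For item~(1), positivity is immediate, since $\cosh$ is strictly positive and $\sinh\omega_i>0$, and continuity follows because the two branches coincide on the diagonal $s=t$, both reducing to $\cosh(\omega_i(1-t))\cosh(\omega_i t)/(\omega_i\sinh\omega_i)$. To obtain the upper bound I would fix $s$ and study $t\mapsto k_i(t,s)$ on each branch: on $\{t\ge s\}$ the factor $\cosh(\omega_i(1-t))$ is decreasing in $t$, while on $\{t\le s\}$ the factor $\cosh(\omega_i t)$ is increasing in $t$; hence the supremum over $t$ is attained at $t=s$, which gives $\sup_t k_i(t,s)=k_i(s,s)=\phi_i(s)$ and the desired upper estimate.

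For the lower bound I would compute the ratio $k_i(t,s)/\phi_i(s)$ on each region. On $\{t\le s\}$ the common factor $\cosh(\omega_i(1-s))$ cancels and the ratio equals $\cosh(\omega_i t)/\cosh(\omega_i s)$, whose minimum over the region is $1/\cosh\omega_i$, attained at $t=0$, $s=1$. On $\{t\ge s\}$ the factor $\cosh(\omega_i s)$ cancels and the ratio equals $\cosh(\omega_i(1-t))/\cosh(\omega_i(1-s))$, again with minimum $1/\cosh\omega_i$, attained at $t=1$, $s=0$. Taking the smaller of the two regional infima yields $c_{k_i}=1/\cosh\omega_i$, as claimed.

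For item~(2), the derivative formulas are obtained by differentiating each branch in $t$ via $(\cosh)'=\sinh$ and the chain rule: on $\{s<t\}$ this produces $-\cosh(\omega_i s)\sinh(\omega_i(1-t))/\sinh\omega_i$ and on $\{t<s\}$ it produces $\cosh(\omega_i(1-s))\sinh(\omega_i t)/\sinh\omega_i$, matching the stated expressions, and the signs are read off at once since every hyperbolic factor is positive on the relevant open region. Each branch is $C^1$ in $t$ away from the diagonal, so for fixed $\tau$ the map $t\mapsto\partial_t k_i(t,s)$ is continuous at $\tau$ for every $s\ne\tau$; as $\{s=\tau\}$ has measure zero, the stated limit holds for a.e.\ $s$. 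Finally, writing $\omega_i\phi_i(s)=\cosh(\omega_i(1-s))\cosh(\omega_i s)/\sinh\omega_i$, the uniform bound reduces to the pointwise inequalities $\sinh(\omega_i(1-t))\le\cosh(\omega_i(1-s))$ on $\{s\le t\}$ and $\sinh(\omega_i t)\le\cosh(\omega_i s)$ on $\{t\le s\}$, both of which follow from $\sinh\le\cosh$ combined with the monotonicity of $\sinh$ and the ordering of the arguments. The only genuinely delicate point is the lower-bound computation, where one must check that the infimum of the ratio over each region equals exactly $1/\cosh\omega_i$ and that the two regional infima agree; the remaining verifications are routine.
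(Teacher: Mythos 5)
Your proof is correct and complete. Note that the paper offers no proof of this lemma at all --- it merely points the reader to the reference \cite{infpieto} ``for some of these estimates'' --- so there is no argument of the authors' to compare yours against; your branch-by-branch verification is exactly the standard computation one would expect here, and every step checks out: the two branches agree on the diagonal (continuity), $t\mapsto k_i(t,s)$ increases up to $t=s$ and decreases afterwards (upper bound $\phi_i(s)=k_i(s,s)$), the ratio $k_i(t,s)/\phi_i(s)$ reduces on each region to a quotient of $\cosh$ values minimized at the corners $(t,s)=(1,0)$ and $(0,1)$ (lower bound $1/\cosh\omega_i$), and the derivative bound follows from $\sinh x\le\cosh x$ together with the monotonicity of $\sinh$ and the ordering of the arguments. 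One pedantic caveat: the strict sign claims $\partial k_i/\partial t<0$ for $s<t$ and $\partial k_i/\partial t>0$ for $s>t$ fail at the boundary points $t=1$ and $t=0$ respectively, where the relevant $\sinh$ factor vanishes; this is an imprecision already present in the lemma as stated (and is harmless for the rest of the paper), not a flaw introduced by your argument, though your phrase ``every hyperbolic factor is positive on the relevant open region'' quietly glosses over it since $t=1$ does belong to the region $0\le s<t\le 1$.
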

By direct calculation  we obtain
\begin{equation}\label{mM} 
m_i:=\left(\sup_{t\in [0,1]}\int_0^1k_i(t,s)\,ds\right)^{-1}=\omega_i^2,
\
M_i:=\left(\inf_{t\in[0,1]}\int_0^1k_i(t,s)ds\right)^{-1}=\omega_i^2
\end{equation} 
 and
\begin{equation}\label{mstar}
m_i^*:= \left(\sup_ {t\in [ 0,1]}\int_{0}^{1}\left | \frac{\partial
k_i}{\partial t}(t,s)\right |\,ds\right)^{-1} = \frac{\omega_i \sinh
(\omega_i)}{2 \, \sinh ^2(\omega_i/2)}.
\end{equation}

For $i=1,2$, set
\begin{equation}\label{ci}
c_i:=c_{k_i}\cdot \min\{1, \omega_i^{-1}\}
\end{equation}
and 
consider the cone in $ C^{1}[0,1]$
\begin{equation*}
K_{i}:=\Bigl\{ w\in C^{1}[0,\,1]: \,\,\min _{t\in [ 0,1]} w(t)\geq
c_i \| w\|_{C^{1}}\Bigr\},
\end{equation*}
which is 
similar to a cone of non-negative functions used by
Krasnosel'ski\u\i{} and Guo in the space $C[0,1]$, see for
example~\cite{guolak, krzab}. 
We define 
 the cone in $C^{1}[0,1]\times C^{1}[0,1]$
\begin{equation*}
K:=\{(u,v)\in K_{1}\times K_{2}\}.%
\end{equation*}

We have the following result.
\begin{pro}
The operator $T$ leaves the cone $K$ invariant and is compact.
\end{pro}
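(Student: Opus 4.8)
The plan is to establish the two assertions separately, working componentwise: the cone $K$ is a product $K_1\times K_2$, so it suffices to control each $T_i$ on $K_i$, using the kernel estimates in the Lemma and the non-negativity of $g_i$ recorded in the Remark after $(H)$.

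\emph{Invariance.} Fix $(u,v)\in K$ and set $w:=T_i(u,v)$ and $I:=\int_0^1 \phi_i(s)\,g_i(s,u(s),v(s),|u'(s)|,|v'(s)|)\,ds$, where by the Remark the integrand is continuous and non-negative. From the two-sided bound $c_{k_i}\phi_i(s)\le k_i(t,s)\le \phi_i(s)$ of part (1) of the Lemma, together with $g_i\ge 0$, I would read off $\|w\|_\infty\le I$ and $\min_{t\in[0,1]}w(t)\ge c_{k_i}I$. Differentiating under the integral sign, which is legitimate by the regularity of $\partial k_i/\partial t$ in part (2) of the Lemma, and using the bound $|\partial k_i/\partial t(t,s)|\le \omega_i\phi_i(s)$ there, I get $\|w'\|_\infty\le \omega_i I$. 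Consequently $\|w\|_{C^1}=\max\{\|w\|_\infty,\|w'\|_\infty\}\le \max\{1,\omega_i\}\,I$, and therefore
\[
\min_{t\in[0,1]} w(t)\ge c_{k_i}I\ge c_{k_i}\,\min\{1,\omega_i^{-1}\}\,\|w\|_{C^1}=c_i\,\|w\|_{C^1},
\]
which is precisely the condition $w\in K_i$. Doing this for $i=1,2$ yields $T(u,v)\in K$.

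\emph{Compactness.} I would first argue continuity of $T$: if $(u_n,v_n)\to(u,v)$ in $C^1[0,1]\times C^1[0,1]$, the arguments of $g_i$ converge uniformly, so by uniform continuity of $g_i$ on the relevant compact set the integrands converge uniformly and, dominated by $\phi_i$ (resp.\ $\omega_i\phi_i$ for the derivative), we obtain $T_i(u_n,v_n)\to T_i(u,v)$ in $C^1[0,1]$. Next, on a bounded set $\mathcal{B}\subset C^1\times C^1$ the continuous function $g_i$ is bounded, say by $L_i$, and the estimates from the first part give $\|T_i(u,v)\|_{C^1}\le \max\{1,\omega_i\}\,L_i\int_0^1\phi_i(s)\,ds$, so $T(\mathcal{B})$ is bounded in $C^1[0,1]\times C^1[0,1]$.

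It remains to prove equicontinuity of both $T_i(u,v)$ and its derivative over $(u,v)\in\mathcal{B}$, and the latter is the point I expect to be the main obstacle. For $T_i(u,v)$ itself equicontinuity is routine: since $k_i$ is continuous, hence uniformly continuous, on $[0,1]\times[0,1]$, one has $|T_i(u,v)(t_1)-T_i(u,v)(t_2)|\le L_i\int_0^1|k_i(t_1,s)-k_i(t_2,s)|\,ds$, with a right-hand side independent of $(u,v)\in\mathcal{B}$ and small for $|t_1-t_2|$ small. For the derivative the kernel $\partial k_i/\partial t$ is only defined for a.e.\ $s$ and jumps across the diagonal $s=t$, so uniform continuity is not available; instead I would invoke the two facts in part (2) of the Lemma, namely the a.e.\ limit $\lim_{t\to\tau}|\partial k_i/\partial t(t,s)-\partial k_i/\partial t(\tau,s)|=0$ and the integrable dominating function $\omega_i\phi_i(s)$, and apply the dominated convergence theorem to conclude that $\int_0^1|\partial k_i/\partial t(t,s)-\partial k_i/\partial t(\tau,s)|\,ds\to 0$ as $t\to\tau$. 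Multiplying by the uniform bound $L_i$ then gives equicontinuity of $\{(T_i(u,v))'\}$ at each $\tau$, uniformly over $\mathcal{B}$, which on the compact interval $[0,1]$ upgrades to uniform equicontinuity. With boundedness and equicontinuity of both the functions and their derivatives established, the Arzel\`a--Ascoli theorem applied in $C^1[0,1]$ to each component shows $T(\mathcal{B})$ is relatively compact, completing the proof.
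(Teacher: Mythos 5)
Your argument is correct and follows essentially the same route as the paper: the invariance part reproduces the paper's estimates verbatim (bounding $\|T_i(u,v)\|_{C^1}$ by $\max\{1,\omega_i\}\int_0^1\phi_i g_i\,ds$ and the minimum from below by $c_{k_i}\int_0^1\phi_i g_i\,ds$, then invoking the definition of $c_i$). For compactness the paper merely cites the kernel properties and Arzel\`a--Ascoli, whereas you supply the details (in particular the dominated-convergence argument for equicontinuity of the derivatives across the diagonal jump of $\partial k_i/\partial t$), which is a welcome but not substantively different elaboration.
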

\begin{proof}
 Let $r>0$
and take $(u,v)\in K$ such that $\|(u,v)\| \leq r$. Then we have,
for $t \in [0,1]$,
\begin{align*}
0\leq T_i(u,v)(t)
&\leq \int_{0}^{1}\phi_i(s)g_i(s,u(s),v(s),|u'(s)|,|v'(s)|)ds
\end{align*}
and
\begin{align*}
\left|\left(
T_i(u,v)\right)'(t)\right|&=\left|\int_{0}^{1}\frac{\partial
k_i}{\partial t}(t,s)g_i(s,u(s),v(s),|u'(s)|,|v'(s)|)ds\right|\\&\le
\int_{0}^{1}\left|
\frac{\partial k_i}{\partial t}(t,s)\right|g_i(s,u(s),v(s),|u'(s)|,|v'(s)|)ds\\
&\leq \omega_i\int_{0}^{1}\phi_i(s)g_{i}(s,u(s),v(s),|u'(s)|,|v'(s)|)ds.
\end{align*}
Consequently we have
\begin{align*}
|| T_i(u,v)|| _{C^1}&\leq \max\{1,\omega_i\}\int_{0}^{1}\phi_i(s)g_i(s,u(s),v(s),|u'(s)|,|v'(s)|)ds.
\end{align*}
Moreover, for $t \in [0,1]$, we get
\begin{equation*}
T_i(u,v)(t)\geq
c_{k_i}\int_{0}^{1}\phi_i(s)g_i(s,u(s),v(s),|u'(s)|,|v'(s)|)\,ds;
\end{equation*}
and thus we obtain
\begin{equation}\label{min}
\min_{t\in [0,1]}T_i(u,v)(t) \geq
c_{k_i}\int_{0}^{1}\phi_i(s)g_i(s,u(s),v(s),|u'(s)|,|v'(s)|)\,ds \geq c_i
||T_i(u,v)|| _{C^1}\,.
\end{equation}

Since \eqref{min} holds for every $r>0$, we have
$T_{i}K_{i}\subset K_{i}$.
By the properties of the Green's functions $k_i$ and using the
Arzel\`{a}-Ascoli Theorem, we obtain the compactness of the operator~$T$.
\end{proof}

We now recall some results regarding the classical fixed point index (more details can be found, for example, in~\cite{Amann-rev, guolak}).
\begin{thm}\label{index}
Let $K$ be a cone in an ordered real Banach space $X$. Let $\Omega $ be
an open bounded subset of $X$ with $0 \in \Omega\cap K$ and
$\overline{\Omega \cap K}\neq K$.
Assume that $F:\overline{\Omega \cap K}\to K$ is a compact map such that $x\neq Fx$ for all $x\in \partial (\Omega \cap K)$. Then the fixed point index $i_{K}(F, \Omega \cap K)$ has the following properties.
\begin{itemize}
\item[(1)] If there exists $h\in K\setminus \{0\}$ such that $x\neq Fx+\lambda h$ for all $x\in \partial (\Omega \cap K)$ and all $\lambda>0$,
then $i_{K}(F, \Omega \cap K)=0$.
\item[(2)] If  $\mu x \neq Fx$ for all $x\in \partial (\Omega \cap K)$ and for every $\mu \geq 1$, then $i_{K}(F, \Omega \cap K)=1$.
\item[(3)] If $i_K(F,\Omega \cap K)\ne0$, then $F$ has a fixed point in $\Omega \cap K$.
\item[(4)] Let $\Omega^{1}$ be open in $X$ with $\overline{\Omega^{1}\cap K}\subset \Omega \cap K$. If $i_{K}(F, \Omega \cap K)=1$ and $i_{K}(F, \Omega^{1}\cap K)=0$, then $F$ has a fixed point in $(\Omega \cap K)\setminus \overline{\Omega^{1}\cap K}$. The same result holds if $i_{K}(F, \Omega \cap K)=0$ and $i_{K}(F, \Omega^{1}\cap K)=1$.
\end{itemize}
\end{thm}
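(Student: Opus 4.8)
The plan is to obtain each of (1)--(4) as a formal consequence of the four fundamental properties of the fixed point index on a cone -- \emph{normalization}, \emph{homotopy invariance}, \emph{additivity}, and the \emph{solution property} -- which may be regarded as established by the construction recalled in~\cite{Amann-rev, guolak}. The genuinely substantive content, namely the existence of a well-defined integer-valued index $i_{K}(F,\Omega\cap K)$ for compact maps on relatively open subsets of the retract $K$ satisfying these axioms, I would take as given; what remains is bookkeeping. Throughout I use the standing hypothesis that $F$ is compact and fixed-point-free on $\partial(\Omega\cap K)$, so that the index is defined.

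Part (3) is essentially the solution property in contrapositive form: were $F$ fixed-point-free on all of $\Omega\cap K$, excision down to the empty set would force $i_{K}(F,\Omega\cap K)=0$; hence a nonzero index yields a fixed point. For part (2) I would use the homotopy $H(t,x):=t\,Fx$, $t\in[0,1]$, which maps $\overline{\Omega\cap K}$ into $K$ since $K$ is a cone. I must check $x\neq H(t,x)$ on $\partial(\Omega\cap K)$ for every $t\in[0,1]$: at $t=0$ this reads $x\neq0$, which holds on the boundary because $0\in\Omega\cap K$ is interior; for $t\in(0,1]$, writing $\mu=1/t\geq1$, the relation $x=t\,Fx$ is equivalent to $\mu x=Fx$, excluded by hypothesis. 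Homotopy invariance then gives $i_{K}(F,\Omega\cap K)=i_{K}(0,\Omega\cap K)$, and the latter equals $1$ by normalization, since the constant map $0$ sends $\overline{\Omega\cap K}$ into the interior point $0$.

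For part (1) I would instead push in the fixed direction $h$, via the homotopy $H(t,x):=Fx+t\,h$ on $t\in[0,\Lambda]$, again valued in $K$ because $Fx\in K$, $t\,h\in K$ and $K$ is closed under addition. The hypothesis $x\neq Fx+\lambda h$ on the boundary for all $\lambda>0$, together with the standing assumption $x\neq Fx$ at $\lambda=0$, rules out boundary fixed points throughout, so homotopy invariance yields $i_{K}(F,\Omega\cap K)=i_{K}\bigl(H(\Lambda,\cdot),\Omega\cap K\bigr)$. The crux is then an a priori bound: since $F$ is compact, $F(\overline{\Omega\cap K})$ is bounded, say by $C$, so any fixed point of $H(\Lambda,\cdot)$ would satisfy $\|x\|\geq\Lambda\|h\|-C$; as $\Omega$ is bounded and $h\neq0$, choosing $\Lambda$ large enough makes this impossible, whence $H(\Lambda,\cdot)$ is fixed-point-free and its index vanishes, giving $i_{K}(F,\Omega\cap K)=0$.

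Finally, part (4) follows from additivity. Since $\overline{\Omega^{1}\cap K}\subset\Omega\cap K$ and $F$ is fixed-point-free on both relevant boundaries, additivity gives $i_{K}(F,\Omega\cap K)=i_{K}(F,\Omega^{1}\cap K)+i_{K}\bigl(F,(\Omega\cap K)\setminus\overline{\Omega^{1}\cap K}\bigr)$; in either stated case the difference of the two known indices is $\pm1\neq0$, so the index over $(\Omega\cap K)\setminus\overline{\Omega^{1}\cap K}$ is nonzero and part (3) supplies a fixed point there. I expect the only real subtlety to be this bookkeeping about where fixed points are excluded: each homotopy and each use of additivity demands that the pertinent boundaries be free of fixed points, and these requirements must be matched precisely to the hypotheses -- the strict inequality $\lambda>0$ in (1), the range $\mu\geq1$ in (2), and the inclusion $\overline{\Omega^{1}\cap K}\subset\Omega\cap K$ in (4). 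Granting the four axioms, no further analytic input is needed.
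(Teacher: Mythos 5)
The paper does not prove this theorem at all: it is recalled as a known result, with the reader referred to~\cite{Amann-rev, guolak}, and is then only \emph{used} in the existence proofs. So there is no ``paper proof'' to match against; what you have written is the standard derivation of properties (1)--(4) from the axioms of the fixed point index on a retract (normalization, homotopy invariance, additivity, existence/excision), and it is essentially the argument one finds in the cited references. Your individual steps are correct: the admissibility of the homotopy $tFx$ in (2) (including the check at $t=0$ using $0\in\Omega\cap K$), the a priori bound $\|x\|\geq\Lambda\|h\|-C$ forcing the index of $Fx+\Lambda h$ to vanish in (1), and the contrapositive use of excision in (3). The only point worth making explicit in (4) is that additivity applies because $\Omega^{1}\cap K$ and $(\Omega\cap K)\setminus\overline{\Omega^{1}\cap K}$ are disjoint relatively open sets whose union omits only $\partial(\Omega^{1}\cap K)$, where $F$ is fixed-point-free by the (implicit) admissibility of $i_{K}(F,\Omega^{1}\cap K)$; you gesture at this but it deserves one sentence. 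Granting the four axioms as you do, the proof is complete and is exactly the intended one.
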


For our index calculations we make use of the following open bounded sets (relative to~$K$), namely, for
$\rho_1,\rho_2>0$,
\begin{equation*}
K_{\rho _{1},\rho _{2}}:=\{(u,v)\in K:|| u|| _{C^{1}}<\rho _{1}\
\text{ and }\ || v|| _{C^{1}}<\rho _{2}\},
\end{equation*}
\begin{equation*}
 V_{\rho _{1},\rho _{2}}:=\{(u,v)\in K:
\displaystyle{\min_{t\in [0,1]}}u(t)<\rho_1 \text{ and
}\displaystyle{\min_{t\in [0,1]}}v(t)<\rho_2\}.
\end{equation*}
\begin{lem}
The sets $K_{\rho _{1},\rho _{2}}$ and $ V_{\rho _{1},\rho _{2}}$ have the following properties:
\begin{itemize}
\item[$(P_1)$]$K_{\rho_1,\rho_2}\subset V_{\rho_1,\rho_2}\subset K_{\rho_1/c_1,\rho_2/c_2}$.
\item[$(P_2)$] $(w_1,w_2)\in\partial K_{\rho _{1},\rho _{2}}$ if and only if
$(w_{1},w_{2})\in K$ and for some $i\in \{1,2\}$
\begin{itemize}
\item[] $\|w_i\|_{\infty}=\rho_i$,\text{} $c_i \rho_i \le
w_i(t)\le \rho_i$ for $t\in[0,1]$  and $-\rho_j\leq
w'_j(t)\leq \rho_j$ for $j=1,2$ and $t\in[0,1]$,
\end{itemize}
or
\begin{itemize}
\item[]
$\|w'_i\|_{\infty}=\rho _{i}$, $0\leq w_j(t)\leq \rho_j$
and $-\rho_j\leq w'_j(t)\leq \rho_j$ for
$j=1,2$ and $t\in[0,1]$.
\end{itemize}
\item[$(P_3)$] $(w_1,w_2) \in \partial V_{\rho_1,\rho_2}$ if and only if  $(w_1,w_2)\in K$ and for some $i\in \{1,2\}$ $\displaystyle
\min_{t\in [0,1]} w_i(t)= \rho_i$  and
 $\rho_i \le w_i(t) \le \rho_i/c_i$  for  $t\in[0,1]$.
\end{itemize}
\end{lem}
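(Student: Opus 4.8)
The plan is to reduce all three statements to the single chain of inequalities
$$
c_i\,\|w\|_{C^1}\le \min_{t\in[0,1]}w(t)\le w(t)\le \|w\|_\infty\le\|w\|_{C^1},\qquad t\in[0,1],
$$
valid for every $w\in K_i$: the leftmost inequality is the defining property of the cone $K_i$, and it already forces $w\ge 0$, so that $\|w\|_\infty=\max_{t}w(t)$ and the middle inequalities are immediate. Everything then follows by feeding the strict or non-strict constraints defining $K_{\rho_1,\rho_2}$ and $V_{\rho_1,\rho_2}$ into this chain.

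For $(P_1)$ I would prove the two inclusions directly. If $(u,v)\in K_{\rho_1,\rho_2}$, then $\min_{t}u(t)\le\|u\|_{C^1}<\rho_1$ and similarly for $v$, hence $(u,v)\in V_{\rho_1,\rho_2}$. Conversely, if $(u,v)\in V_{\rho_1,\rho_2}$, the cone inequality gives $c_1\|u\|_{C^1}\le\min_{t}u(t)<\rho_1$, so $\|u\|_{C^1}<\rho_1/c_1$, and likewise $\|v\|_{C^1}<\rho_2/c_2$, whence $(u,v)\in K_{\rho_1/c_1,\rho_2/c_2}$.

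For $(P_2)$ and $(P_3)$ the key preliminary observation is that both sets are relatively open in $K$: each is the intersection of $K$ with the preimage of an open half-line under a continuous functional, namely $(u,v)\mapsto\|u\|_{C^1}$ (respectively $(u,v)\mapsto\min_{t}u(t)$, which is continuous since $w\mapsto\min_t w(t)$ is $1$-Lipschitz for $\|\cdot\|_\infty$) in the first coordinate, and analogously in the second. Hence the boundary relative to $K$ is exactly the set of points of $K$ at which both defining inequalities hold non-strictly and at least one holds with equality. It then remains to translate these (in)equalities into the stated pointwise conditions. For $(P_3)$ this is short: $\min_{t}w_i(t)=\rho_i$ gives $w_i(t)\ge\rho_i$ by definition of the minimum, while $\|w_i\|_\infty\le c_i^{-1}\min_{t}w_i(t)=\rho_i/c_i$ from the cone inequality yields $w_i(t)\le\rho_i/c_i$. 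For $(P_2)$ one expands $\|w_i\|_{C^1}=\max\{\|w_i\|_\infty,\|w_i'\|_\infty\}=\rho_i$ into the two alternatives $\|w_i\|_\infty=\rho_i$ and $\|w_i'\|_\infty=\rho_i$; in the first, the cone inequality upgrades $\|w_i\|_{C^1}=\rho_i$ to $c_i\rho_i\le w_i(t)\le\rho_i$, in the second the non-negativity together with $\|w_j\|_\infty\le\|w_j\|_{C^1}\le\rho_j$ gives $0\le w_j(t)\le\rho_j$, and in both cases $\|w_j'\|_\infty\le\|w_j\|_{C^1}\le\rho_j$ produces the derivative bounds for $j=1,2$.

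The routine parts are the inclusions of $(P_1)$ and the topological identification of the relative boundary. The one point demanding care is the bookkeeping in $(P_2)$, where the $\max$ defining the $C^1$-norm splits the boundary into the two cases, and one must track which component saturates and whether saturation occurs in the sup-norm of the function or in that of its derivative; correctly reading off the pointwise bounds on the non-saturating component from the non-strict constraints is where I expect the main (purely technical) difficulty to lie.
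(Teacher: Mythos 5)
The paper states this lemma without proof, so there is no argument of the authors' to compare against; your proof is the standard one and is essentially correct. Two points deserve a remark. First, your identification of the relative boundary as ``both defining inequalities non-strict and at least one an equality'' requires, besides the (correct) observation that $K_{\rho_1,\rho_2}$ and $V_{\rho_1,\rho_2}$ are relatively open, the reverse inclusion $\{(u,v)\in K:\|u\|_{C^1}\le\rho_1,\ \|v\|_{C^1}\le\rho_2\}\subseteq\overline{K_{\rho_1,\rho_2}}$ (and its analogue for $V_{\rho_1,\rho_2}$); this follows by approximating $(u,v)$ with $\lambda(u,v)$ as $\lambda\uparrow 1$, using that $K$ is a cone so that all the relevant functionals scale, and it should be said explicitly. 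Second, be aware that the ``if'' directions of $(P_2)$ and $(P_3)$, as literally written in the lemma, do not follow from your (correct) boundary description and are in fact false as stated: in the first alternative of $(P_2)$ nothing in the listed conditions bounds $\|w_j\|_{\infty}$ for $j\neq i$, and in $(P_3)$ nothing constrains the other component at all, so a pair satisfying the listed conditions need not lie in the closure of the set in question. What your argument establishes --- and what the index computations in Theorems 3.1--3.3 of the paper actually use --- is the ``only if'' direction: every relative boundary point satisfies the stated pointwise bounds, together with $\|w_j\|_{C^1}\le\rho_j$ (resp.\ $\min_{t\in[0,1]}w_j(t)\le\rho_j$) for both components. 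With that reading, your derivation of the pointwise bounds from the cone inequality $\min_{t\in[0,1]}w(t)\ge c_i\|w\|_{C^1}$, including the case split on whether $\|w_i\|_{\infty}$ or $\|w_i'\|_{\infty}$ realizes the $C^1$-norm, is complete and correct.
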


\section{Existence and non-existence of solutions}
In this Section we establish some existence and non-existence
results for the semilinear elliptic system
\begin{equation}\label{PDE}
\begin{cases}\
-\Delta u =f_1(|x|,u,v,|\nabla u|,|\nabla v|)\ \text{in}\ \Omega, \\
-\Delta v=f_2(|x|,u,v,|\nabla u|,|\nabla v|)\ \text{in}\ \Omega,\\
\ \displaystyle\frac{\partial u}{\partial r}=\frac{\partial
v}{\partial r}=0
\ \text{on}\ \partial\Omega\,,
\end{cases}
\end{equation}
where $\Omega=\{ x\in\mathbb{R}^n : R_0<|x|< R_1\}$,
$0<R_0<R_1<+\infty$.

We define the following
sets:
\begin{align*}
\tilde{\Omega}_1^{\rho_1,\rho_2}&=[ R_0,R_1]\times \left [\rho_1,
\frac{\rho_1}{c_1}\right]\times\left [0,
\frac{\rho_2}{c_2}\right]\times \left[0,
\frac{\alpha\rho_1}{c_1}\right]\times\left [ 0,
\frac{\alpha\rho_2}{c_2}\right],\\
\tilde{\Omega}_2^{\rho_1,\rho_2}&=[ R_0,R_1]\times \left [0,
\frac{\rho_1}{c_1}\right]\times\left [\rho_2,
\frac{\rho_2}{c_2}\right]\times \left[0,
\frac{\alpha\rho_1}{c_1}\right]\times\left [ 0,
\frac{\alpha\rho_2}{c_2}\right],\\
\tilde{A}_1^{s_1,s_2}&=[
R_0,R_1]\times[c_1s_1,s_1]\times[0,s_2]\times\left[0,
\alpha s_1\right]\times\left[0,\alpha s_2\right],\\
\tilde{A}_2^{s_1,s_2}&=[R_0,R_1]\times[0,s_1]\times[c_2s_2,s_2]\times\left[0,
\alpha s_1\right]\times\left[0, \alpha s_2\right],
\end{align*}
where $\alpha:=\displaystyle\inf_{t \in [0,1]}|r'(t)|$ and $c_i$ is given by~\eqref{ci}.

The first existence result is the following.

\begin{thm}\label{ellyptic}
 Suppose that condition $(H)$ is satisfied and there exist
$\rho_1, \rho_2, s_1,s_2\in (0,+\infty )$, with $\rho _{i}/c_i<c_is
_{i}\,,\,\,i=1,2$, such that the following conditions hold
\begin{equation}\label{pde1}
\inf_{\tilde{\Omega}_i^{\rho_1,\rho_2}} f_i(r, w_1,w_2,z_1,z_2)>0,
\end{equation}
\begin{equation}\label{pde2}
\sup_{\tilde{A}_i^{s_1,s_2}}
f_i(r,w_1,w_2,z_1,z_2)<\frac{(\min\{m_i,m_i^*\}-\omega_i^2)}{\displaystyle\sup_{t
\in [0,1]}d(t)}\,s_i.
\end{equation}

Then the system~\eqref{PDE} has at least one non-negative radial solution.
\end{thm}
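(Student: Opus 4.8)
The plan is to obtain the solution as a fixed point of $T$ produced by property (4) of Theorem~\ref{index}, applied to the pair of relatively open sets $V_{\rho_1,\rho_2}\subset K_{s_1,s_2}$. Specifically, I would show that~\eqref{pde1} forces $i_K(T,V_{\rho_1,\rho_2})=0$ and that~\eqref{pde2} forces $i_K(T,K_{s_1,s_2})=1$; since, by $(P_1)$ and the assumption $\rho_i/c_i<c_is_i<s_i$, one has $\overline{V_{\rho_1,\rho_2}}\subset K_{s_1,s_2}$, property (4) then yields a fixed point of $T$ in $K_{s_1,s_2}\setminus\overline{V_{\rho_1,\rho_2}}$, that is, a non-negative radial solution of~\eqref{PDE}. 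The common mechanism behind both index computations is that, on the two boundaries, the arguments $(r(s),u(s),v(s),|u'(s)/r'(s)|,|v'(s)/r'(s)|)$ of $f_i$ stay, for the appropriate index $i$ and all $s\in[0,1]$, inside $\tilde\Omega_i^{\rho_1,\rho_2}$ on $\partial V_{\rho_1,\rho_2}$ and inside $\tilde A_i^{s_1,s_2}$ on $\partial K_{s_1,s_2}$; this is read from $(P_2)$, $(P_3)$, the cone inequality $\min_t w(t)\ge c_i\|w\|_{C^1}$, and the constant $\alpha$ relating $|w'(s)|$ to the gradient slot $|w'(s)/r'(s)|$.

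For the index on $K_{s_1,s_2}$ I would argue by contradiction with property (2). Suppose $\mu(u,v)=T(u,v)$ with $(u,v)\in\partial K_{s_1,s_2}$ and $\mu\ge1$, and write $u_1=u$, $u_2=v$. By $(P_2)$ there is an $i$ with $\|u_i\|_{C^1}=s_i$, and membership of $u_i$ in $K_i$ gives $c_is_i\le u_i(t)\le s_i$ for all $t$, placing the arguments of $f_i$ in $\tilde A_i^{s_1,s_2}$. Writing $g_i=d\,f_i+\omega_i^2u_i$ and using~\eqref{pde2} together with $u_i\le s_i$, I would bound $g_i$ on $[0,1]$ so that $\|g_i\|_\infty<\min\{m_i,m_i^*\}\,s_i$. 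Combined with the estimates in the proof of the invariance proposition and the identities~\eqref{mM}--\eqref{mstar}, this gives $\|T_i(u,v)\|_{C^1}\le(\min\{m_i,m_i^*\})^{-1}\|g_i\|_\infty<s_i$, which contradicts $\|T_i(u,v)\|_{C^1}=\mu\|u_i\|_{C^1}=\mu s_i\ge s_i$. Hence property (2) gives $i_K(T,K_{s_1,s_2})=1$, and the same strict inequality shows $T$ is fixed-point free on $\partial K_{s_1,s_2}$.

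For the index on $V_{\rho_1,\rho_2}$ I would use property (1) with $h=(1,1)\in K\setminus\{0\}$. Suppose $(u,v)=T(u,v)+\lambda h$ with $(u,v)\in\partial V_{\rho_1,\rho_2}$ and $\lambda\ge0$. By $(P_3)$ there is an $i$ with $\min_t u_i(t)=\rho_i$ and $\rho_i\le u_i(t)\le\rho_i/c_i$, so the arguments of $f_i$ lie in $\tilde\Omega_i^{\rho_1,\rho_2}$, where~\eqref{pde1} gives $f_i>0$. Then $g_i=d\,f_i+\omega_i^2u_i\ge\omega_i^2\rho_i$ on $[0,1]$, and since $\int_0^1k_i(t,s)\,ds\equiv m_i^{-1}=\omega_i^{-2}$ by~\eqref{mM}, I get $T_i(u,v)(t)\ge\omega_i^2\rho_i\,m_i^{-1}=\rho_i$, with strict inequality because $f_i>0$ on $\tilde\Omega_i^{\rho_1,\rho_2}$. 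Taking minima in $t$ then gives $\rho_i=\min_t u_i(t)=\min_tT_i(u,v)(t)+\lambda>\rho_i$, a contradiction; thus property (1) gives $i_K(T,V_{\rho_1,\rho_2})=0$, and the case $\lambda=0$ shows $T$ has no fixed point on $\partial V_{\rho_1,\rho_2}$.

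The index bookkeeping and the final application of property (4) are routine; the step I expect to require the most care is the claim, used in both computations, that the arguments of $f_i$ remain inside $\tilde\Omega_i^{\rho_1,\rho_2}$ and $\tilde A_i^{s_1,s_2}$ on the respective boundaries. This relies on using the cone condition, rather than the norm bounds alone, to control $u_i(t)$ from below (in particular for the active component, where $\|u_i\|_{C^1}$ is prescribed but the pointwise lower bound comes from $K_i$), and on the conversion between $|u'(s)|$ and $|u'(s)/r'(s)|$ encoded by $\alpha$. The other delicate point is the treatment of the shift term $\omega_i^2u_i$: it supplies exactly the threshold $\rho_i$ in the index-$0$ estimate through the identity $\int_0^1k_i(t,s)\,ds\equiv\omega_i^{-2}$, and it is responsible for the correction $-\omega_i^2$ appearing in~\eqref{pde2}.
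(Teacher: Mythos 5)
Your proposal is correct and follows essentially the same route as the paper: the index-$0$ computation on $\partial V_{\rho_1,\rho_2}$ via property (1) with $h=(1,1)$ and the identity $\int_0^1k_i(t,s)\,ds\equiv\omega_i^{-2}$, the index-$1$ computation on $\partial K_{s_1,s_2}$ via property (2) and the constants $m_i,m_i^*$, and the conclusion via property (4) using $(P_1)$ and $\rho_i/c_i<c_is_i<s_i$. The only (cosmetic) difference is that you bound $\|T_i(u,v)\|_{C^1}$ in one stroke by $\|g_i\|_\infty/\min\{m_i,m_i^*\}$, whereas the paper treats the cases $\|u\|_\infty=s_1$ and $\|u'\|_\infty=s_1$ separately.
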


\begin{proof}
Firstly, we note that the choice of the numbers $\rho_i$ and $s_i$
assures the compatibility of the conditions \eqref{pde1} and
\eqref{pde2}. Moreover, since
$$g_i(t,u(t),v(t),|u'(t)|,|v'(t)|)=d(t)f_i\left(r(t),u(t),v(t),\left|\frac{u'(t)}{r'(t)}\right|,\left|\frac{v'(t)}{r'(t)}\right|\right)+\omega_i^2u(t),$$
when $f_i$ acts on
$\tilde{\Omega}_i^{\rho_1,\rho_2}$ and on $\tilde{A}_i^{s_1,s_2}$,
$g_i$ acts respectively on the sets $\Omega_i^{\rho_1,\rho_2}$
and $A_i^{s_1,s_2}$, defined by
\begin{align*}
\Omega_1^{\rho_1,\rho_2}&=[ 0,1]\times \left [\rho_1,
\frac{\rho_1}{c_1}\right]\times\left [0,
\frac{\rho_2}{c_2}\right]\times \left[0,
\frac{\rho_1}{c_1}\right]\times\left [ 0,
\frac{\rho_2}{c_2}\right],\\
\Omega_2^{\rho_1,\rho_2}&=[ 0,1]\times \left [0,
\frac{\rho_1}{c_1}\right]\times\left [\rho_2,
\frac{\rho_2}{c_2}\right]\times \left[0, \frac{\rho_1}{c_1}\right]
\times\left [ 0,
\frac{\rho_2}{c_2}\right],\\
A_1^{s_1,s_2}&=[0,1]\times[c_1s_1,s_1]\times[0,s_2]\times[0,s_1]\times[0,s_2],\\
A_2^{s_1,s_2}&=[0,1]\times[0,s_1]\times[c_2s_2,s_2]\times[0,
s_1]\times[0, s_2].
\end{align*}

By using Theorem~\ref{index}, we show that the compact operator
$T$ defined in \eqref{operT} has a fixed point in
$K_{s_1,s_2}\setminus \overline{V}_{\rho_1,\rho_2}$.

Consider $h(t)=1$ for $t\in [ 0,1]$ and note that $(h,h)\in K$.
Firstly we claim that
\begin{equation*}
(u,v)\neq T(u,v)+\lambda (h,h),\  \text{for}\ (u,v)\in \partial
V_{\rho_1,\rho_2}\ \text{and}\ \lambda \geq 0,
\end{equation*}%
that assures that $i_K(T, V_{\rho_1,\rho_2})=0$.

Assume, by contradiction, that there exist $(u,v)\in
\partial V_{\rho_1,\rho_2}$ and $\lambda \geq 0$ such that
$(u,v)=T(u,v)+\lambda (h,h)$.
Without loss of generality, by $(P_3)$ we can assume that, for $t\in [0,1]$,
$$\min_{t\in[0,1]} u(t)=
\rho_1,\,\rho_1\leq u(t)\leq {\rho_1/c_1}\text{ and }
-{\rho_1/c_1}\le u'(t)\leq {\rho_1/c_1},
$$
$$
0\leq v(t)\leq {\rho_2/c_2}\text{ and }-{\rho_2/c_2} \leq v'(t)\leq
{\rho_2/c_2}.
$$
Therefore we consider, for $t\in [ 0,1]$,
\begin{equation}\label{eq}
u(t) =
\int_{0}^{1}k_1(t,s)g_1(s,u(s),v(s),|u'(s)|,|v'(s)|)ds+{\lambda}.
\end{equation}

By (\ref{pde1}) in $\Omega_1^{\rho_1,\rho_2}$ we have
$$
g_1(t,u(t),v(t),|u'(t)|,|v'(t)|)>\omega_1^2u(t)
$$
and, consequently, we have
\begin{equation}\label{index0}
\min_{\Omega_1^{\rho_1,\rho_2}} g_1(t,
u(t),v(t),|u'(t)|,|v'(t)|)>\omega_1^2\min_{t \in
[0,1]}u(t)=\omega_1^2\rho_1\,.
\end{equation}
Taking the minimum over $[0,1]$ in \eqref{eq} and using
(\ref{index0}), we obtain
\begin{equation*}
\rho_1=\min_{t\in [ 0,1]}u(t)>\omega_1^2 \rho_1\min_{t \in
[0,1]}\int_0^1 k_1(t,s)ds+{\lambda }=\omega_1^2 \rho_1
\frac{1}{\omega_1^2}+\lambda=\rho_1+\lambda,
\end{equation*}%
i.e. a contradiction. The case of $\min_{t\in[0,1]} v(t)=\rho_2$ follows in a similar manner.

We claim now that $\lambda (u,v)\neq T(u,v)$ for every $(u,v)\in
\partial K_{s_1,s_2}$ and for every $\lambda \geq 1$,
which implies that $i_{K}(T,K_{s_1,s_2})=1$.

Assume this is not
true. Then there exist $\lambda \geq 1$ and $(u,v)\in
\partial K_{s_1,s_2}$ such that $\lambda (u,v)=T(u,v)$.
Suppose that
\begin{equation*}
|| u || _{\infty}=s_1,\,\, \,|| u' ||_{\infty}\leq s_1,\,\,\,||
v|| _{\infty}\leq s_2 \text{  and   }|| v'|| _{\infty}\leq s_2.
\end{equation*}

Consider, for $t\in [0,1]$,
\begin{equation}\label{u}
\lambda u(t)= \int_{0}^{1} k_1(t,s) g_1(s,u(s),v(s),|u'(s)|,
|v'(s)|)ds.
\end{equation}

Note that, by (\ref{pde2}), we have, in $A_1^{s_1,s_2}$,
\begin{align*}
g_1(t,u(t),v(t),|u'(t)|,|v'(t)|)&<\frac{d(t)}{\displaystyle\sup_{t \in
[0,1]}d(t)}(\min\{m_1,m_1^*\}-\omega_1^2) s_1+\omega_1^2u(t)\\
&\leq (\min\{m_1,m_1^*\}-\omega_1^2)\, s_1+\omega_1^2u(t),
\end{align*}
and, consequently, we obtain
\begin{multline}\label{index1}
\max_{A_1^{s_1,s_2}}
g_1(t,u(t),v(t),|u'(t)|,|v'(t)|)\\
<(\min\{m_1,m_1^*\}-\omega_1^2)\,
s_1+\omega_1^2\max_{t \in [0,1]}u(t)=\min\{m_1,m_1^*\}s_1.
\end{multline}
Taking the maximum in $[0,1]$ in \eqref{u} and using
(\ref{index1}), we have
\begin{equation*}
\lambda\, s_1< m_1s_1\,\max_{t \in
[0,1]}\int_0^1k_1(t,s)ds=m_1s_1\frac{1}{m_1}=s_1,
\end{equation*}
i.e. $\lambda s_1<s_1,$ which contradicts the fact that $\lambda
\geq 1$.

If we have
\begin{equation*}
|| u || _{\infty}\leq s_1,\,\,\,|| u' ||_{\infty}=s_1,\,\,\,|| v
|| _{\infty}\leq s_2\,\,\, \text{   and   }\,\,\,|| v' ||
_{\infty}\leq s_2,
\end{equation*}%
then we obtain
\begin{align}\label{u'}
\lambda | u'(t) | &\leq \int_0^1\left | \frac{\partial k_1}{\partial
t}(t,s)\right | g_1(s,u(s),v(s),|u'(s)|,|v'(s)|)ds.
\end{align}
Taking the maximum in $[0,1]$ in \eqref{u'} and using the condition
 $(\ref{index1})$, we obtain
\begin{equation*}
\lambda\,s_1 < m_1^*s_1\, \max_{t\in [ 0,1]}\int_{0}^{1}\left |
\frac{\partial k_1}{\partial t}(t,s)\right |\,ds=m_1^*s_1\frac{1}{m_1^*}
=s_1,
\end{equation*}
a contradiction. The other two cases follow in a similar manner.

Therefore we have $i_K(T, V_{\rho_1,\rho_2})=0$ and $i_K(T,
K_{s_1,s_2})=1$.
Since $\displaystyle\frac{\rho_i}{c_i}<c_is_i<s_i$, $i=1,2$, by
property $(P_1)$ we have $\overline{V}_{\rho_1,\rho_2}\subset
\overline{K}_{\frac{\rho_1}{c_1},\frac{\rho_2}{c_2}}\subset
K_{s_1,s_2}$. It follows from Theorem \ref{index} that $T$ has a
fixed point $(\bar{u}, \bar{v})$ in $K_{s_1,s_2}\setminus
\overline{V}_{\rho_1,\rho_2}$.

Therefore the system~\eqref{PDE} admits a non-negative radial solution.
\end{proof}
\begin{rem}\label{rem2}
The conditions~\eqref{pde1} and~\eqref{pde2} are nothing but growth estimates on two boxes. 
If we fix  $\Omega=\{ x\in\mathbb{R}^2 : 1<|x|<e\}$ and $\omega_i=1$, then~\eqref{pde2} reads
\begin{equation*}
\sup_{\tilde{A}_i^{s_1,s_2}}
f_i(r,w_1,w_2,z_1,z_2)<0,
\end{equation*}
a sign condition on $f_i$ on the set $\tilde{A}_i^{s_1,s_2}$.
\end{rem}

We can obtain another existence result in which the constrains on the growth of one of the components is relaxed at the cost of having to deal with a larger domain.

We denote by
$\tilde{\Omega}^{*^{\rho_1,\rho_2}}$ the subset of $\mathbb
R^5$
\begin{align*}
\tilde{\Omega}^{*^{\rho_1,\rho_2}}&=[ R_0,R_1]\times \left [0,
\frac{\rho_1}{c_1}\right]\times\left [0,
\frac{\rho_2}{c_2}\right]\times \left[0,
\frac{\alpha\rho_1}{c_1}\right]\times\left [ 0,
\frac{\alpha\rho_2}{c_2}\right].
\end{align*}
\begin{thm}\label{ellyptic2} Suppose that condition $(H)$ is satisfied and there exist
$\rho_1, \rho_2, s_1,s_2\in (0,+\infty )$, with $\rho _{i}/c_i<c_is
_{i}\,,\,\,i=1,2$, such that the following conditions hold:
\begin{equation}\label{pde3}
\inf_{\tilde{\Omega}^{*^{\rho_1,\rho_2}}} f_i(r,
w_1,w_2,z_1,z_2)>\frac{\omega_i^2}{\displaystyle\inf_{t \in
[0,1]}d(t)}\,\rho_i,\ \mbox{for some}\ i=1,2,
\end{equation}
\begin{equation}\label{pde4}
\sup_{\tilde{A}_i^{s_1,s_2}}
f_i(r,w_1,w_2,z_1,z_2)<\frac{(\min\{m_i,m_i^*\}-\omega_i^2)}{\displaystyle\sup_{t
\in [0,1]}d(t)}\,s_i,  \mbox{ for }i=1,2.
\end{equation}

Then the system~\eqref{PDE} has at least one non-negative radial
solution.
\end{thm}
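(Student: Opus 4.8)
The plan is to run the same two-set index argument as in the proof of Theorem~\ref{ellyptic}: I will produce two relatively open bounded subsets of $K$ on which the fixed point index of $T$ equals $0$ and $1$, respectively, and then conclude with property~(4) of Theorem~\ref{index}. As these sets I again take $V_{\rho_1,\rho_2}$ and $K_{s_1,s_2}$; since $\rho_i/c_i<c_is_i<s_i$, property~$(P_1)$ gives $\overline V_{\rho_1,\rho_2}\subset K_{s_1,s_2}$, so once I show $i_K(T,V_{\rho_1,\rho_2})=0$ and $i_K(T,K_{s_1,s_2})=1$, a fixed point of $T$, hence a non-negative radial solution of~\eqref{PDE}, is obtained in $K_{s_1,s_2}\setminus\overline V_{\rho_1,\rho_2}$.

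The computation $i_K(T,K_{s_1,s_2})=1$ is identical to the corresponding one in the proof of Theorem~\ref{ellyptic}, because~\eqref{pde4} coincides with~\eqref{pde2} for $i=1,2$: the bounds through $m_i$ and $m_i^*$ show that $\mu(u,v)\neq T(u,v)$ on $\partial K_{s_1,s_2}$ for every $\mu\ge1$, and I would simply reproduce those estimates.

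The new ingredient is $i_K(T,V_{\rho_1,\rho_2})=0$ under the weaker hypothesis~\eqref{pde3}, which is assumed for only one index, say $i=1$. Taking $h\equiv1$, I claim that $(u,v)\neq T(u,v)+\lambda(h,h)$ for every $(u,v)\in\partial V_{\rho_1,\rho_2}$ and every $\lambda\ge0$. The decisive remark is that on the \emph{entire} boundary $\partial V_{\rho_1,\rho_2}$ one has $\min_t u(t)\le\rho_1$ and $\min_t v(t)\le\rho_2$; since $(u,v)\in K_1\times K_2$, this already forces $\|u\|_{C^1}\le\rho_1/c_1$ and $\|v\|_{C^1}\le\rho_2/c_2$, so that $0\le u(t)\le\rho_1/c_1$, $0\le v(t)\le\rho_2/c_2$, $|u'(t)|\le\rho_1/c_1$ and $|v'(t)|\le\rho_2/c_2$ for all $t$, no matter which of the two alternatives in $(P_3)$ occurs. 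Hence the arguments of $g_1$ always lie in the box $[0,1]\times[0,\rho_1/c_1]\times[0,\rho_2/c_2]\times[0,\rho_1/c_1]\times[0,\rho_2/c_2]$, equivalently those of $f_1$ lie in $\tilde{\Omega}^{*^{\rho_1,\rho_2}}$, and~\eqref{pde3} applies. Using the definition of $g_1$ together with $d(t)\ge\inf_\tau d(\tau)$ I then obtain the pointwise lower bound $g_1(t,u(t),v(t),|u'(t)|,|v'(t)|)>\omega_1^2\rho_1$. Writing the first component of the supposed identity as $u(t)=\int_0^1k_1(t,s)g_1(s,\dots)\,ds+\lambda$ and recalling from~\eqref{mM} that $\inf_t\int_0^1k_1(t,s)\,ds=1/M_1=\omega_1^{-2}$, taking the minimum over $t$ yields $\min_t u(t)>\omega_1^2\rho_1\cdot\omega_1^{-2}+\lambda\ge\rho_1$, contradicting $\min_t u(t)\le\rho_1$. (If~\eqref{pde3} holds instead for $i=2$, the same reasoning applied to the $v$-equation gives $\min_t v(t)>\rho_2$, again impossible.) This establishes $i_K(T,V_{\rho_1,\rho_2})=0$.

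The step I expect to be the crux --- and the reason this result costs only one growth condition from below --- is the observation that $\min_t u(t)\le\rho_1$ holds on all of $\partial V_{\rho_1,\rho_2}$, not merely on the face where $\min_t u(t)=\rho_1$. It is precisely this, combined with the enlarged box $\tilde{\Omega}^{*^{\rho_1,\rho_2}}$ (in which $u$ and $v$ are allowed to drop to $0$), that lets a single bound~\eqref{pde3} dispose of both alternatives of $(P_3)$ at once, whereas Theorem~\ref{ellyptic} needed a positivity requirement on both $f_1$ and $f_2$; the price paid is the larger domain and the quantitative threshold $\omega_i^2\rho_i/\inf_t d(t)$ in place of mere positivity. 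Everything else is a routine transcription of the estimates already carried out for Theorem~\ref{ellyptic}.
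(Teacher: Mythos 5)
Your proposal is correct and follows essentially the same route as the paper: the index-$1$ computation on $\partial K_{s_1,s_2}$ is carried over verbatim from Theorem~\ref{ellyptic}, while for the index-$0$ computation the paper likewise uses that every $(u,v)\in\partial V_{\rho_1,\rho_2}$ satisfies $0\le u\le\rho_1/c_1$, $|u'|\le\rho_1/c_1$, $0\le v\le\rho_2/c_2$, $|v'|\le\rho_2/c_2$, so that the single hypothesis~\eqref{pde3} on the enlarged box yields $g_1>\omega_1^2\rho_1$ pointwise and hence $\min_t u(t)>\rho_1+\lambda$, a contradiction. You have merely made explicit the final step that the paper compresses into ``one proceeds as in the proof of Theorem~\ref{ellyptic}'', correctly noting that the contradiction is now with $\min_t u(t)\le\rho_1$ rather than with equality.
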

\begin{proof}
Note that, when $f_i$ acts on
$\tilde{\Omega}^{*^{\rho_1,\rho_2}}$, $g_i$ acts on the subset
\begin{align*}
\Omega^{*^{\rho_1,\rho_2}}&=[0,1]\times \left [0,
\frac{\rho_1}{c_1}\right]\times\left [0,
\frac{\rho_2}{c_2}\right]\times \left[0, \frac{\rho_1}{c_1}\right]
\times\left [0, \frac{\rho_2}{c_2}\right].
\end{align*}

Suppose that (\ref{pde3}) holds for $i=1$. Let $(u,v)\in \partial
V_{\rho_1,\rho_2}$ and $\lambda \geq 0$ such that
$(u,v)=T(u,v)+\lambda (1,1)$. Thus we have, for $t\in [0,1]$,
$$0\leq u(t)\leq {\rho_1/c_1},\,\,-{\rho_1/c_1} \le u'(t)\leq {\rho_1/c_1},\,\,0\leq v(t)\leq {\rho_2/c_2},\,\,-{\rho_2/c_2} \leq v'(t)\leq
{\rho_2/c_2}\,.
$$
By (\ref{pde3}) in $\Omega^{*^{\rho_1,\rho_2}}$ we have
\begin{align*}
g_1(t,u(t),v(t),|u'(t)|,|v'(t)|)>\frac{d(t)}{\displaystyle\inf_{t \in
[0,1]}d(t)}\omega_1^2\rho_1+\omega_1^2u(t)\geq\omega_1^2\rho_1+\omega_1^2u(t)
\end{align*}
and, consequently, we obtain
\begin{equation*}
\min_{\Omega^{*^{\rho_1,\rho_2}}} g_1(t,
u(t),v(t),|u'(t)|,|v'(t)|)>\omega_1^2\rho_1\,.
\end{equation*}

From now on,  one proceeds as in the proof of Theorem \ref{ellyptic}.
\end{proof}

By means of Theorem~\ref{index} and the results contained in Theorems~\ref{ellyptic} and~\ref{ellyptic2}, it is possible to state results regarding the existence of  \emph{several} non-negative solutions
of the system~\eqref{PDE}. For brevity, here we state a result regarding the existence of three non-negative solutions and refer to~\cite{lan, lan-lin-na,  lanwebb} for the other kind of results that can be stated.

\begin{thm}\label{multi2}
Suppose that condition $(H)$ is satisfied and there exist $\rho _{i},s_{i},\theta_i,\sigma_i\in
(0,+\infty )$ with $\rho _{i}<s _{i}$,  $s _{i}/c_i<c_i\theta
_{i}$ and $\theta_i<\sigma _{i}$, $i=1,2$,  such that
\begin{equation}\label{uno}
\sup_{\tilde{A}_i^{\rho_1,\rho_2}}
f_i(r,w_1,w_2,z_1,z_2)<\frac{(\min\{m_i,m_i^*\}-\omega_i^2)}{\displaystyle\sup_{t
\in [0,1]}d(t)}\,\rho_i, \mbox{ for }i=1,2,
\end{equation}
\begin{equation}\label{due}
\inf_{\tilde{\Omega}_i^{s_1,s_2}} f_i(r,
w_1,w_2,z_1,z_2)>0, \mbox{ for  }i=1,2,
\end{equation}
\begin{equation}\label{tre}
\sup_{\tilde{A}_i^{\theta_1,\theta_2}}
f_i(r,w_1,w_2,z_1,z_2)<\frac{(\min\{m_i,m_i^*\}-\omega_i^2)}{\displaystyle\sup_{t
\in [0,1]}d(t)}\,\theta_i, \mbox{ for }i=1,2
\end{equation}
and
\begin{equation}\label{quattro}
\inf_{\tilde{\Omega}_i^{\sigma_1,\sigma_2}} f_i(r,
w_1,w_2,z_1,z_2)>0, \mbox{ for }i=1,2,
\end{equation}
hold.

Then the system~\eqref{PDE} has at least three non-negative radial
solutions.
\end{thm}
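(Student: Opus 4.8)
The plan is to reduce the whole statement to four fixed point index computations, each of which is a verbatim repetition of one of the two index arguments already carried out in the proof of Theorem~\ref{ellyptic}, and then to combine them by means of property~(4) of Theorem~\ref{index}. As in that proof, when $f_i$ acts on the five-dimensional boxes $\tilde{A}_i^{\cdot,\cdot}$ and $\tilde{\Omega}_i^{\cdot,\cdot}$, the shifted nonlinearity $g_i$ acts on the corresponding boxes $A_i^{\cdot,\cdot}$ and $\Omega_i^{\cdot,\cdot}$, so that each of the hypotheses~\eqref{uno}--\eqref{quattro} translates directly into a pointwise estimate on $g_i$ of exactly the type used before.

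First I would run the index-one argument twice. Condition~\eqref{uno} is precisely hypothesis~\eqref{pde2} with $s_i$ replaced by $\rho_i$, so repeating the computation that establishes $\lambda(u,v)\neq T(u,v)$ on $\partial K_{s_1,s_2}$ (splitting into the cases $\|u\|_\infty=\rho_1$, $\|u'\|_\infty=\rho_1$, and the symmetric ones) yields $i_K(T,K_{\rho_1,\rho_2})=1$; likewise~\eqref{tre} is~\eqref{pde2} with $\theta_i$ in place of $s_i$, giving $i_K(T,K_{\theta_1,\theta_2})=1$. Then I would run the index-zero argument twice. Condition~\eqref{due} is hypothesis~\eqref{pde1} with $s_i$ in place of $\rho_i$, so repeating the computation that establishes $(u,v)\neq T(u,v)+\lambda(h,h)$ on the relevant boundary (with $h\equiv 1$) gives $i_K(T,V_{s_1,s_2})=0$, and~\eqref{quattro} gives $i_K(T,V_{\sigma_1,\sigma_2})=0$ in the same way.

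The decisive step is to arrange these four sets into a strictly increasing chain so that property~(4) can be invoked on each consecutive pair. Using property~$(P_1)$ together with the inequalities $\rho_i<s_i$, $s_i/c_i<c_i\theta_i$, $\theta_i<\sigma_i$ and the fact that $c_i=c_{k_i}\min\{1,\omega_i^{-1}\}\le 1/\cosh\omega_i<1$, I would verify the three relative-closure inclusions
$$
\overline{K_{\rho_1,\rho_2}}\subset V_{s_1,s_2},\qquad
\overline{V_{s_1,s_2}}\subset K_{\theta_1,\theta_2},\qquad
\overline{K_{\theta_1,\theta_2}}\subset V_{\sigma_1,\sigma_2}.
$$
The first and third follow at once from $\min_{t}w_i(t)\le\|w_i\|_\infty\le\|w_i\|_{C^1}$ combined with $\rho_i<s_i$ and $\theta_i<\sigma_i$. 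The middle one is obtained by first passing from $V_{s_1,s_2}\subset K_{s_1/c_1,s_2/c_2}$ via~$(P_1)$ and then using $s_i/c_i<c_i\theta_i<\theta_i$. Applying property~(4) of Theorem~\ref{index} to the three consecutive pairs, whose indices read $1,0$, then $0,1$, then $1,0$, produces a fixed point of $T$ in each of
$$
V_{s_1,s_2}\setminus\overline{K_{\rho_1,\rho_2}},\qquad
K_{\theta_1,\theta_2}\setminus\overline{V_{s_1,s_2}},\qquad
V_{\sigma_1,\sigma_2}\setminus\overline{K_{\theta_1,\theta_2}}.
$$
Because the chain is strictly nested, these three regions are pairwise disjoint, so the three fixed points are distinct and each yields, through the correspondence between~\eqref{1syst} and~\eqref{PDE}, a non-negative radial solution.

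I do not expect a genuine obstacle, since both index values have already been computed once and the bulk of the work is bookkeeping. The single point deserving care is the middle inclusion $\overline{V_{s_1,s_2}}\subset K_{\theta_1,\theta_2}$, as it is the only place where one must convert a constraint on $\min_t w_i$ into a constraint on $\|w_i\|_{C^1}$; this is exactly why the hypotheses demand the stronger gap $s_i/c_i<c_i\theta_i$ rather than the naive $s_i<\theta_i$, and it is the one estimate I would write out in full to guarantee that the three annular regions are honestly separated.
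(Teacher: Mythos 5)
Your proposal is correct and follows essentially the same route as the paper: the four index computations $i_K(T,K_{\rho_1,\rho_2})=i_K(T,K_{\theta_1,\theta_2})=1$ and $i_K(T,V_{s_1,s_2})=i_K(T,V_{\sigma_1,\sigma_2})=0$ obtained by repeating the two arguments of Theorem~\ref{ellyptic}, followed by property~(4) of Theorem~\ref{index} applied to the three nested consecutive pairs. In fact you supply more detail than the paper does, in particular the explicit verification via~$(P_1)$ of the inclusion $\overline{V_{s_1,s_2}}\subset K_{\theta_1,\theta_2}$ and of the pairwise disjointness of the three annular regions.
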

\begin{proof}
As in the proof of Theorem~\ref{ellyptic}, the condition~\eqref{uno} implies that  $i_{K}(T,K_{\rho_1,\rho_2})=1$ and the condition~\eqref{due} provides that
$i_K(T, V_{s_1,s_2})=0$. This fact gives the existence of a first fixed point in  $V_{s_1,s_2}\setminus \overline{K}_{\rho_1,\rho_2}$. A second fixed point in  $K_{\theta_1,\theta_2}\setminus \overline{V}_{s_1,s_2}$ is given by the conditions~\eqref{tre} and ~\eqref{due}. Finally, the third fixed point is achieved in $V_{\sigma_1,\sigma_2}\setminus \overline{K}_{\theta_1,\theta_2}$ , by means of the conditions~\eqref{quattro} and~\eqref{tre}.
\end{proof}

In the following  example we show the applicability of Theorem~\eqref{multi2} and prove the existence of at least three non-negative, non-constant solutions.

\begin{ex}
Consider  in $\mathbb{R}^2$  the system of BVPs
\begin{gather}\label{ellbvpex}
\begin{cases}
-\Delta u =e^{-(|\nabla u|^2+|\nabla v|^2+6)}u(u-1-h(|x|))(u-2-h(|x|))(u-4-h(|x|))(2-\cos v)\text{ in } \Omega, \\
-\Delta v=e^{-(|\nabla u|^2+|\nabla v|^2+7)}v(v-1-h(|x|))(v-4-h(|x|))(v-7-h(|x|))(2-\sin u)\text{ in } \Omega,\\
\ \displaystyle\frac{\partial u}{\partial r}=\frac{\partial
v}{\partial r}=0
 \text{ on }\partial\Omega\,,
\end{cases}
\end{gather}
where $\Omega=\{ x\in\mathbb{R}^2 : 1<|x|<e\}$ and $h(|x|)=\displaystyle\frac{|x|^2}{333}$.

For these nonlinearities we take $\omega_{1}=\omega_{2}=1$ and by direct computation we get
\begin{align*}
 m_1=m_2=1, \,m_1^*=m_2^*=\frac{ \sinh
(1)}{2 \, \sinh ^2(1/2)}\cong 2.16,\, c_1=c_2=(\cosh 1)^{-1}\cong 0.64.
\end{align*}
With the choice of
$$
\rho_1=\rho_2=1/2,\,\, s_1=\frac{11}{10},\,\, s_2=2,\,\,
\theta_1=\frac{7}{2},\,\,\theta_2=\frac{13}{2},\,\,
\sigma_1=5,\,\, \sigma_2=8,
$$
we obtain, for $i=1,2,$
\begin{gather*}
\sup_{\tilde{A}_i^{\rho_1,\rho_2}}
f_i(r,w_1,w_2,z_1,z_2)<0, \quad 
\inf_{\tilde{\Omega}_i^{s_1,s_2}} f_i(r,
w_1,w_2,z_1,z_2)>0,\\
\sup_{\tilde{A}_i^{\theta_1,\theta_2}}
f_i(r,w_1,w_2,z_1,z_2)<0, \quad 
\inf_{\tilde{\Omega}_i^{\sigma_1,\sigma_2}} f_i(r,
w_1,w_2,z_1,z_2)>0.
\end{gather*}
Keeping in mind Remark~\ref{rem2}, the conditions \eqref{uno}-\eqref{quattro} of Theorem~\ref{multi2} are
satisfied; therefore the system~\eqref{ellbvpex} has at least
three nonnegative solutions. Note that, due to the presence of the term $h$ within the equations, the solutions have at least one non-constant component.
\end{ex}

We conclude by showing some non-existence results for the system~\eqref{PDE}.

\begin{thm}
Assume that condition $(H)$ is satisfied and one of following conditions holds:
\begin{equation}\label{cond1}
f_i(r,w_1,w_2,z_1,z_2)<0\,,\,\, r\in [R_0,R_1] \text{ and }
w_i>0,\,\,i=1,2,
\end{equation}
\begin{equation}\label{cond2}
f_i(r,w_1,w_2, z_1,z_2)>0\,\,,\,\, r\in [R_0,R_1]  \text{ and }
w_i>0\,,\,\,i=1,2.
\end{equation}

Then the only possible non-negative solution of the system \eqref{PDE} in $C^1[R_0,R_1]\times
C^1[R_0,R_1]$ is the trivial one.
\end{thm}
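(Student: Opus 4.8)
The plan is to reduce the problem to the associated ODE system and then exploit the Neumann boundary conditions through a single integration together with a pointwise extremum (convexity) analysis. First I would observe that, by definition, a non-negative radial solution of~\eqref{PDE} is a non-negative solution $(u,v)$ of~\eqref{1syst}, and that, since $g_i=d(t)f_i(\dots)+\omega_i^2 u$, each component in fact satisfies the \emph{unshifted} equation
$$-u''(t)=d(t)\,f_1\!\left(r(t),u(t),v(t),\left|\tfrac{u'(t)}{r'(t)}\right|,\left|\tfrac{v'(t)}{r'(t)}\right|\right),\qquad u'(0)=u'(1)=0,$$
and analogously for $v$ with $f_2$; continuity of $f_i$ makes the right-hand side continuous in $t$, so $u,v\in C^2[0,1]$. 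Because the two hypotheses are imposed symmetrically on $f_1$ (through $w_1$) and on $f_2$ (through $w_2$), it suffices to prove $u\equiv0$, the argument for $v$ being identical. The elementary tool is the identity obtained by integrating over $[0,1]$ and using the Neumann conditions, namely $0=-\int_0^1 u''(t)\,dt=\int_0^1 d(t)\,f_1(\dots)\,dt$, where $d(t)>0$ on $[0,1]$.

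Under~\eqref{cond2} I would argue as follows. On the open set $\{u>0\}$ we have $f_1>0$, hence $u''=-d\,f_1<0$, so $u$ is strictly concave there. At an interior boundary point of a connected component of $\{u>0\}$ the $C^1$ regularity forces both $u$ and $u'$ to vanish, while at an endpoint $t\in\{0,1\}$ the Neumann condition forces $u'=0$. Since a strictly concave function vanishing together with its derivative at an endpoint of an interval is strictly negative in the interior, this contradicts $u>0$ and rules out every configuration except $\{u>0\}=[0,1]$. In that remaining case $d\,f_1>0$ throughout, so the integral identity equates a strictly positive quantity to $0$, again a contradiction. Hence $\{u>0\}=\varnothing$, i.e. $u\equiv0$.

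Under~\eqref{cond1} I would use the maximum directly. If $u\not\equiv0$ then $u$, being non-negative and continuous, attains a positive maximum at some $t^*$. If $t^*\in(0,1)$ then $u''(t^*)\le0$, whereas $u(t^*)>0$ gives $f_1<0$ and therefore $u''(t^*)=-d(t^*)f_1>0$, a contradiction. If $t^*\in\{0,1\}$ then $u'(t^*)=0$ by the Neumann condition while $u''(t^*)>0$, so a second-order expansion shows $u$ exceeds $u(t^*)$ at nearby interior points, contradicting maximality. Thus $u\equiv0$, and combining both cases with the analogous statements for $v$ yields $(u,v)=(0,0)$.

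I expect the main obstacle to be the~\eqref{cond2} case. Unlike~\eqref{cond1}, a non-negative solution could a priori vanish on part of $[0,1]$ while remaining positive elsewhere, so the naive extremum argument applied at the minimum only delivers $\min u=0$ rather than $u\equiv0$. The strict concavity of $u$ on its positivity set, combined with the simultaneous vanishing of $u$ and $u'$ at the boundary of that set (interior zeros by $C^1$ matching, boundary points by the Neumann condition), is precisely what excludes such configurations; the integral identity then disposes of the only surviving possibility, in which $u$ is strictly positive on all of $[0,1]$.
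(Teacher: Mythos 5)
Your argument is correct, but it takes a genuinely different route from the paper's. The paper never leaves the fixed-point formulation: a non-negative radial solution is a fixed point of the integral operator $T$, and since $T$ maps into the cone $K$, any nontrivial component automatically satisfies $\min_{t\in[0,1]}u(t)\geq c_{1}\|u\|_{C^{1}}>0$; the sign hypotheses therefore apply at \emph{every} point, and the exact identities $\sup_{t}\int_{0}^{1}k_{1}(t,s)\,ds=\inf_{t}\int_{0}^{1}k_{1}(t,s)\,ds=\omega_{1}^{-2}$ convert the pointwise comparison of $g_{1}$ with $\omega_{1}^{2}u$ into the contradictions $\|u\|_{\infty}<\|u\|_{\infty}$ (under~\eqref{cond1}, taking the maximum) and $\min u>\min u$ (under~\eqref{cond2}, taking the minimum). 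You instead undo the shift and work with the bare ODE $-u''=d\,f_{1}(\cdots)$: a maximum-principle argument for~\eqref{cond1}, and for~\eqref{cond2} strict concavity on the positivity set combined with the identity $\int_{0}^{1}d(t)f_{1}(\cdots)\,dt=0$ forced by the Neumann conditions. Your approach is more elementary and self-contained --- it needs neither the Green's function nor the cone --- at the price of exactly the case analysis you identify as the crux of~\eqref{cond2}: the partially-vanishing configurations that the cone's Harnack-type inequality excludes for free in the paper's proof. One small imprecision to tidy up: at an endpoint $t\in\{0,1\}$ lying in the closure of a component of $\{u>0\}$ you only get $u'=0$, not $u=0$; but every component other than $[0,1]$ itself does possess at least one boundary point at which $u$ and $u'$ vanish simultaneously (by the interior-minimum property of a zero of a non-negative $C^{1}$ function, or by the Neumann condition when that boundary point is $0$ or $1$), which is all your concavity lemma needs, so the argument closes.
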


\begin{proof}
Suppose that (\ref{cond1}) holds and assume that there exists a
solution $(\bar{u},\bar{v})\in C^1[R_0,R_1]\times
C^1[R_0,R_1]$ of \eqref{PDE} $(\bar{u},\bar{v})\neq
(0,0)$; then, since $T$ maps the product of two cones of non-negative functions in $C^1[0,1]$ in $K$,
 $(u,v):=(\bar{u}\circ r,\bar{v}\circ r)$ is a fixed
point of $T$.

Let, for example, be $\|(u,v)\|=\|u\|_{C^1} \neq 0$
and, consequently, $\|u\|_{\infty}\neq 0$.
 Then, for $t\in [0,1]$, we have
\begin{multline*}
u(t)  =\int_0^1k_1(t,s)g_1(s,u(s),v(s),|u'(s)|,|v'(s)|)ds\\
=\int_0^1k_1(t,s)\left(d(t)f_1\left(r(s),u(s),v(s),\left|\frac{u'(s)}{r(s)}\right|,\left|\frac{v'(s)}{r(s)}\right|\right)+\omega_1^2u(s)\right)ds\\
<\omega_1^2\int_0^1k_1(t,s)u(s)ds \le
\omega_1^2\|u\|_\infty\int_0^1k_1(t,s)\,ds.
\end{multline*}
 Taking the maximum for $t\in [0,1]$, we have
$$
\|u\|_\infty<  \omega_1^2\|u\|_\infty\max_{t\in[0,1]}\int_0^1k_1(t,s)\,ds
=\|u\|_\infty,
$$
a contradiction.

Suppose that~\eqref{cond2} holds and assume that there exists
$(u,v)\in K$ such that $(u,v)=T(u,v)$ and $(u,v)\neq (0,0)$. Take,
for example, $\|u\|_{\infty}\neq 0$; then
$\sigma:=\displaystyle\min_{t\in[0,1]}u(t)>0$ since $u \in K_1$.  We
have, for $t\in [0,1]$,
\begin{align*}
&u(t)=  \int_0^1 k_1(t,s)\left(d(t)f_1\left(r(s),u(s),v(s),\left|\frac{u'(s)}{r(s)}\right|,\left|\frac{v'(s)}{r(s)}\right|\right)+\omega_1^2u(s)\right)ds\\
&>\omega_1^2 \int_0^1k_1(t,s)u(s)ds.
\end{align*}
Taking the minimum  for $t\in [0,1]$, we obtain
$$
\sigma=\min_{t\in[0,1]}u(t)> \omega_1^2\min_{t\in[0,1]}\int_{0}^{1}
k_1(t,s)u(s)\,ds \geq \omega_1^2\sigma \min_{t\in[0,1]}\int_{0}^{1} k_1(t,s)\,ds
=\sigma,
$$
a contradiction.
\end{proof}

\end{document}